\newtheorem{theorem}{Theorem}[section]
\newtheorem{lemma}[theorem]{Lemma}
\newtheorem{corollary}[theorem]{Corollary}
\theoremstyle{definition}
\newtheorem{remark}[theorem]{Remark}
\newtheorem{definition}[theorem]{Definition}
\newtheorem{example}[theorem]{Example}
\numberwithin{equation}{section}
\begin{document}

\title[On, Around, and Beyond Frobenius' Theorem]{On, Around, and Beyond Frobenius' Theorem on Division Algebras}
\author{Matej Bre\v sar}
\author{Victor S. Shulman}

\address{M. Bre\v sar,  Faculty of Mathematics and Physics,  University of Ljubljana,
 and Faculty of Natural Sciences and Mathematics, University 
of Maribor, Slovenia}
 \email{matej.bresar@fmf.uni-lj.si}

\address{V. Shulman,  Department of Mathematics, 
Vologda State University, 
Vo\-logda, Russia}

\begin{abstract}
 Frobenius' Theorem states that  the algebra of quaternions $\mathbb H$ is, besides the fields of real and complex numbers, the only finite-dimensional real division algebra. We first give a short elementary proof of this theorem, then characterize finite-dimensional real algebras that contain either a copy of $\mathbb C$, a copy of $\mathbb H$, or a pair of anticommuting invertible elements through the dimensions of their (left) ideals, and finally consider the problem of lifting algebraic elements  modulo ideals.
\end{abstract}

\email{shulman.victor80@gmail.com}

\thanks{The first named author was supported by the Slovenian Research Agency (ARRS) Grant P1-0288. }

\keywords{Frobenius' Theorem, quaternions, division algebra, finite-dimensional algebra, Wedderburn's Principal Theorem,   liftable roots}

\subjclass[2010]{16D70, 16K20, 16N40, 16P10}
\maketitle

\section{Introduction}

In 1878, 35 years after Hamilton's discovery of the algebra of quaternions $\mathbb H$, Frobenius published 
the paper \cite{F} in which he
showed that 
there are no other finite-dimensional real division algebras than $\mathbb R$, $\mathbb C$, and $\mathbb H$.
The present paper is centered around this beautiful classical theorem.

Several proofs of Frobenius' Theorem are known. Some textbooks (for example,  \cite{INCA, Lam}) present elementary ones  and some (for example, \cite{Hun, P})
present
more advanced ones. In Section \ref{s2}, we give a new elementary proof which, we believe,
nicely illustrates the usefulness of linear algebra concepts and may  therefore be interesting also for students.

Two striking features of the algebra of quaternions, which are also reflected in our proof of Frobenius' Theorem, are  the existence of elements having square $-1$ and the existence of pairs of  nonzero (and hence invertible) anticommuting elements.
 In Section \ref{s3}, we consider such elements in an arbitrary finite-dimensional real algebra $A$.
We show in particular that $\mathbb C$ (resp. $\mathbb H$) embeds in $A$ if and only if the dimension of every left ideal of $A$ is an even number (resp. a multiple of $4$). In the case of $\mathbb C$, we can replace "left ideal" by  "ideal" in this statement. The condition that  the dimension of every ideal of $A$ is  a multiple of $4$, however, is equivalent to the condition that $A$ contains a pair of anticommuting invertible elements.

Our proofs in Section \ref{s3} depend on Wedderburn's Principal Theorem 
which is used to reduce  problems on general algebras to  semisimple ones.
 Motivated by the results on lifting idempotents and matrix units modulo ideals, we have asked ourselves whether some general lifting properties could be used instead of this theorem.
However, we discovered that lifting of the standard  basis of the quaternions modulo nil ideals is not always possible, so this question 
 turned out to be more subtle than first expected. Accordingly, we have focused on
the (more narrow in some aspects and broader in others)
 problem of lifting arbitrary algebraic elements. This is the central topic of Section \ref{s4}. 
We show that such a lifting modulo any nil ideal is possible 
 if and only if the polynomial whose root is  the element in question is separable. A similar statement---which, however, requires the involvement of 
the splitting field---holds for lifting modulo any ideal of a finite-dimensional algebra.

We  close this introduction with a word on conventions. 
We will work in the framework of associative  algebras with $1$. Accordingly, all subalgebras are supposed to contain  $1$,
all modules are supposed to be unital, and all homomorphisms are supposed to send $1$ to $1$.

\section{Frobenius' Theorem}\label{s2}

Let $A$ be a real algebra. For any $\lambda\in \mathbb R$, we write $\lambda 1\in A$ simply as $\lambda$, and in this way identify $\mathbb R1$ with $\mathbb R$. 
We write 
$$a\circ b= ab+ba$$
for  any   $a,b\in A$, and we write
span$\{a_1,\dots,a_n\}$ for the linear span of the elements $a_1,\dots,a_n\in A$.

In this and the next section, we  restrict our attention to the case where $A$ is finite-dimensional.
Then every element in $A$ is algebraic over $\mathbb R$, i.e.,
there exists a nonzero  polynomial $f(X)\in \mathbb R[X]$ 
such that
 $f(a)=0$. The Fundamental Theorem of Algebra states, in one of its forms, that $f(X)$ is a product of linear and quadratic polynomials in $\mathbb R[X]$. Therefore, the product of linear and quadratic polynomials in $a$ is equal to $0$. This fact  is one of the two tools that will be used in our proof of Frobenius' Theorem. The second tool is even more elementary: every linear operator from an odd-dimensional real vector space to itself has a real eigenvalue.

 Before proceeding to Frobenius' Theorem, we record 
a little observation from which this paper actually stemmed.
 Assume that  the dimension of $A$ is an odd number.  Since the rule  $x\mapsto ax$, where $a$ is an arbitrary element in $A$, defines a linear operator  from $A$ to $A$,  there exist a $\lambda\in\mathbb R$ and a nonzero $v\in A$ such that
  $(a-\lambda)v =av-\lambda v=0$.
Thus, $a-\lambda$ is a zero-divisor, unless $a\in \mathbb R$. In particular, if $A$ is not $1$-dimensional and hence isomorphic to $\mathbb R$, it cannot be a division algebra. Note that  the associative law has not been used here, so  this holds even for nonassociative algebras. Of course,  this is anything but new---it is  well-known that finite-dimensional not necessarily associative real division algebras exist only in dimensions $1$, $2$, $4$, and $8$ \cite{BM, Ker}. Our excuse for pointing out this argument
is that it may be used for pedagogical purposes, for example to motivate the introduction of the  algebra of quaternions $\mathbb H$ (and perhaps also the algebra of octonions $\mathbb O$). Let us recall,
for convenience of the reader,  that $\mathbb H$ is a 
$4$-dimensional real division algebra whose standard basis consists of $1$ and elements $i,j,k$ satisfying
$$i^2=j^2=k^2=-1,\,\,\, ij=-ji=k,\,\,\, jk=-kj =i,\,\,\,ki=-ik=j$$ ($\mathbb O$, on the other hand, is an $8$-dimensional  nonassociative real division algebra; note that when using the term algebra without any adjective, we always have in mind an associative algebra.)


 We now turn to Frobenius' Theorem.

\begin{theorem} {\bf (Frobenius' Theorem)} \label{f} A finite-dimensional real division algebra $D$ is isomorphic  to 
$\mathbb R$, $\mathbb C$, or $\mathbb H$.
\end{theorem}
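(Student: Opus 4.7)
The plan is to build up a copy of $\mathbb H$ inside $D$ via ``pure imaginary'' elements, using only the factorization of real polynomials and the absence of zero divisors in $D$.

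\emph{Step 1: producing square roots of $-1$.} First I would note that for any $a\in D$, the minimal polynomial factors over $\mathbb R$ into linear and irreducible quadratic pieces; since $D$ has no zero divisors, only one such factor can annihilate $a$, so the minimal polynomial is itself either linear (giving $a\in\mathbb R$) or an irreducible quadratic $X^2+pX+q$. In the latter case, completing the square yields $(a+p/2)^2 = (p^2-4q)/4<0$, so after rescaling one obtains an element of $D$ squaring to $-1$. In particular, each $a\in D$ decomposes as $a = -p/2+(a+p/2)$, so defining $V := \{a\in D : a^2\in\mathbb R_{\le 0}\}$ gives the set-theoretic decomposition $D = \mathbb R+V$ with $\mathbb R\cap V=\{0\}$.

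\emph{Step 2: showing $V$ is a subspace.} This will be the crux. Given $u,v\in V$, I would first dispose of the case where $1,u,v$ are linearly dependent, which forces $v\in\mathbb R u$ (otherwise writing $v = \alpha'+\beta' u$ and looking at the coefficient of $u$ in $v^2$ would push $v$ into $\mathbb R$). In the generic case, both $u+v$ and $u-v$ lie outside $\mathbb R$ and so satisfy quadratic relations $(u\pm v)^2 = p_{\pm}(u\pm v)+q_{\pm}$. Summing these and comparing with the identity $(u+v)^2+(u-v)^2 = 2(u^2+v^2)\in\mathbb R$, the linear independence of $1,u,v$ will force $p_+=p_-=0$. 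Hence $(u+v)^2\in\mathbb R$; and $(u+v)^2>0$ would factor as $(u+v-r)(u+v+r)=0$ and push $u+v$ into $\mathbb R$, a contradiction. So $u+v\in V$. The same computation will also give $uv+vu\in\mathbb R$ for all $u,v\in V$, so $\langle u,v\rangle := -\tfrac12(uv+vu)$ defines a symmetric $\mathbb R$-valued bilinear form on $V$, positive definite because $u^2=0$ forces $u=0$ in $D$.

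\emph{Step 3: conclusion by case analysis on $\dim V$.} If $\dim V\le 1$, then $D$ is $\mathbb R$ or $\mathbb R\oplus\mathbb R v\cong\mathbb C$. If $\dim V\ge 2$, I would pick an orthonormal pair $i,j\in V$ (so $i^2=j^2=-1$ and $ij=-ji$) and set $k := ij$; a direct computation gives $k^2=-1$ together with the standard quaternion relations, identifying the linear span of $\{1,i,j,k\}$ with a subalgebra $\mathbb H'\cong\mathbb H$ of $D$. To rule out $\dim V\ge 4$, I would pick $\ell\in V$ orthogonal to $i,j,k$ and exploit anticommutation: on one hand $(ij)\ell = -(i\ell)j = (\ell i)j = \ell(ij) = \ell k$, on the other $k\ell = -\ell k$, so $\ell k = -\ell k$ and hence $\ell k = 0$---impossible in a division algebra. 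Therefore $D = \mathbb H'\cong \mathbb H$.

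The principal obstacle is Step 2; once $V$ is known to be a subspace carrying a positive definite form, the rest reduces to routine manipulation inside the resulting copy of $\mathbb H$.
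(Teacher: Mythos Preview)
Your argument is correct and is essentially the classical ``pure imaginary subspace'' proof (in the style of Palais/Lam): you isolate $V=\{a:a^2\in\mathbb R_{\le 0}\}$, prove it is a linear subspace via the identity $(u+v)^2+(u-v)^2=2(u^2+v^2)$, extract the positive definite form $\langle u,v\rangle=-\tfrac12(uv+vu)$, and finish with an orthonormal basis and an associativity contradiction. The paper takes a genuinely different route. It never introduces $V$ as a global subspace or the inner product; instead, given $a$ with $a^2=-1$ and any $b\notin\operatorname{span}\{1,a\}$, it observes that $x\mapsto a\circ x$ maps the \emph{three}-dimensional space $\operatorname{span}\{1,a,b\}$ to itself and hence has a real eigenvalue $\lambda$, then shows $\lambda=0$ by conjugating $a$ by the eigenvector. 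This produces an anticommuting square root of $-1$ directly, and the same eigenvalue trick is reused to bound the commutant of $i$ and rule out dimension $>4$. Your approach buys the clean Euclidean picture on $V$ and a one-line associativity contradiction for $\dim V\ge 4$; the paper's approach buys a proof whose only nontrivial ingredient beyond the factorization of real polynomials is the existence of real eigenvalues for operators on odd-dimensional real spaces, which is the pedagogical point the authors wish to advertise.
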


\begin{proof}
We divide the proof into three steps. 

\smallskip
(a)  {\em  For every $a\in D$, there exist $\alpha,\beta\in \mathbb R$ such that $a^2 = \alpha a+\beta$. Moreover, if $\alpha=0$ and $a\notin \mathbb R$, then $\beta < 0$ and so $(\frac{1}{\sqrt{-\beta}}a)^2=-1$.}
\smallskip

{\em Proof of {\rm (a)}}.
The first statement is
a  consequence of the aforementioned version of  the Fundamental Theorem of Algebra. Suppose
 $\alpha= 0$. If $\beta \ge  0$, then$(a- \sqrt{\beta})(a+ \sqrt{\beta})=0$, yielding $a=\pm \sqrt{\beta}\in\mathbb R$.
{\tiny $\square$}
\smallskip

\smallskip 
(b) {\em Let $a\in D$ be such that $a^2=-1$. Then, for every $b\in D\setminus  {\rm span}\{1,a\}$, there exists a $c\in {\rm span}\{1,a,b\}\setminus  {\rm span}\{1,a\}$ such that $a\circ c =0$ and $c ^2=-1$.}
\smallskip

{\em Proof of {\rm (b)}}. 
Since $a\circ b = (a+b)^2-a^2-b^2$,  we see from (a) that
$a\circ b$ lies in $V={\rm span}\{1,a,b\}$. Consequently, the 
 linear operator $x\mapsto a\circ x$ maps  $V$ to itself. 
 Since $\dim V =3$, there exist a $\lambda\in\mathbb R$ and  a nonzero $v\in V$ such that
$a\circ v = \lambda v$. That is, $\lambda - a = vav^{-1}$, and hence $(\lambda -a)^2 = va^2v^{-1}=-1$.
On the other hand, $(\lambda -a)^2 = \lambda^2 - 2\lambda a -1$. Comparing the two identities we get $\lambda^2 - 2\lambda a =0$, which yields $\lambda=0$ since $a\notin \mathbb R$.
Thus,    $a\circ v =0$.
This implies that
$a$ commutes with $v^2$ but not with $v$. 
 From (a) it first follows that $v^2\in\mathbb R$, and then that there exists a $c\in \mathbb R v$ such that
$c^2=-1$. Of course,  $a\circ c=0$ and so  $c\notin  {\rm span}\{1,a\}$.
{\tiny $\square$}

\smallskip
(c) {\em $D$ is isomorphic to $\mathbb R$, $\mathbb C$, or $\mathbb H$.}
\smallskip

{\em Proof of {\rm (c)}}. 
We may assume that $\dim_{\mathbb R}  D> 1$ (since otherwise
 $D\cong\mathbb R$).   
As  $a^2-\alpha a\in \mathbb R$ implies 
 $(a-\frac{\alpha}{2})^2\in\mathbb R$, it follows from (a) that   $D$ contains an element $i$ satisfying $i^2=-1$.  Therefore, $D\cong \mathbb C$ if
$\dim_{\mathbb R}  D= 2$, so let $\dim_{\mathbb R}  D> 2$.
By (b),  there exists a $j\in D\setminus  {\rm span}\{1,i\}$ such that $i\circ j =0$ and
$j^2=-1$. Set $k=ij$. Note that $k ^2=-1$, $k i = -ik=j$, and $jk=-kj=i$.
 Using $k\circ i = k\circ j=0$ it is easy to see that $k\notin {\rm span}\{1,i,j\}$. Thus, $D$ contains 
a subalgebra isomorphic to $\mathbb H$. It remains to  show that this subalgebra is not proper.
 Suppose this were not true.  Then,
by (b), there would exist a $c\in D\setminus {\rm span}\{1,i,j,k\}$ such that 
$i\circ c =0$, implying that $jc$ commutes with $i$. However, only the elements in span$\{1,i\}$ commute with
 $i$---indeed, if $b\notin {\rm span}\{1,i\}$ then  $bi\ne ib$ since, by (b),  $ {\rm span}\{1,i,b\}$ contains an element not commuting with $i$. Thus, $jc = \gamma i + \delta$ for some $\gamma,\delta\in \mathbb R$.
Multiplying from left by $-j$ we get $c = \gamma k  - \delta j$, contradicting $c\notin {\rm span}\{1,i,j,k\}$.
{\tiny $\square$}
\end{proof}

It is clear from the proof that we may replace the finite-dimensionality of $D$ by a milder assumption that $D$ is an algebraic algebra.

\section{Quaternion-Like Elements in Finite-Dimensional Algebras}\label{s3}

Our proof of Frobenius' Theorem was based on elements whose square is $-1$, i.e., elements that generate an algebra  
isomorphic to $\mathbb C$. This was a natural approach since the algebra of quaternions $\mathbb H$ has plenty of such elements.  We will now show that the existence of only one such element in a finite-dimensional real algebra has an impact on its global structure. First we introduce some notation: by
$M_n(B)$ we denote the algebra of $n\times n$ matrices over the algebra $B$, and by rad$(A)$ 
the  radical of the finite-dimensional algebra $A$ (i.e., rad$(A)$ is the (unique) maximal  nilpotent ideal of $A$).

\begin{theorem}\label{even1}
Let $A$ be a nonzero finite-dimensional   real algebra. The following statements are equivalent:
\begin{enumerate}
\item[{\rm (i)}] $\mathbb C$ embeds in $A$ (i.e., $A$ contains an element $a$ satisfying  $a ^2=-1$).
\item[{\rm (ii)}] $A$ contains an element $a$ such that $a-\lambda$ is invertible for every $\lambda\in\mathbb R$.
\item[{\rm (iii)}] The dimension of any left ideal of  $A$ is an even number.
\item[{\rm (iv)}] The dimension of any ideal of  $A$ is an even number.
\end{enumerate}
\end{theorem}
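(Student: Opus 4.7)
The plan is to prove the cycle (i) $\Rightarrow$ (ii) $\Rightarrow$ (iii) $\Rightarrow$ (iv) $\Rightarrow$ (i). The first three implications are elementary; the last requires Wedderburn's Principal Theorem.

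For (i) $\Rightarrow$ (ii), if $a^2=-1$ then for every $\lambda\in\mathbb R$ the identity $(a-\lambda)(a+\lambda)=-(\lambda^2+1)$ exhibits $a-\lambda$ as invertible, with explicit inverse $-(a+\lambda)/(\lambda^2+1)$. For (ii) $\Rightarrow$ (iii), let $I$ be a left ideal of $A$; the rule $x\mapsto ax$ defines a real-linear operator $L_a\colon I\to I$. Any real eigenvalue $\lambda$ of $L_a$ would provide a nonzero $x\in I$ with $(a-\lambda)x=0$, contradicting the invertibility of $a-\lambda$. Since every operator on an odd-dimensional real vector space has a real eigenvalue (the second tool recalled in Section \ref{s2}), $\dim_{\mathbb R} I$ must be even. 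The implication (iii) $\Rightarrow$ (iv) is trivial, since every two-sided ideal is a left ideal.

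The substantive step is (iv) $\Rightarrow$ (i), which I would prove by contrapositive: assuming $\mathbb C$ does not embed in $A$, I would exhibit an ideal of odd dimension. By Wedderburn's Principal Theorem write $A=S\oplus \mathrm{rad}(A)$ with $S$ a semisimple subalgebra; since $S\subseteq A$, no copy of $\mathbb C$ lies in $S$ either. Wedderburn--Artin combined with Frobenius' Theorem then give $S\cong \prod_{i=1}^m M_{n_i}(D_i)$ with each $D_i\in\{\mathbb R,\mathbb C,\mathbb H\}$. Factors with $D_i\in\{\mathbb C,\mathbb H\}$ are ruled out because $\mathbb C$ embeds canonically in $M_n(\mathbb C)$ and in $M_n(\mathbb H)$ for every $n\ge 1$. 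For $D_i=\mathbb R$, any $M\in M_n(\mathbb R)$ with $M^2=-I_n$ satisfies $\det(M)^2=(-1)^n$, forcing $n$ to be even; under our hypothesis each surviving $n_i$ must therefore be odd, so the simple factor $M_{n_i}(\mathbb R)$ of $S$ has odd dimension $n_i^2$. Pulling this factor back through $A\to A/\mathrm{rad}(A)\cong S$ yields an ideal $J$ of $A$ with $\dim_{\mathbb R} J = n_i^2+\dim_{\mathbb R}\mathrm{rad}(A)$. If $\dim_{\mathbb R}\mathrm{rad}(A)$ is even, then $J$ is an ideal of odd dimension; if it is odd, then $\mathrm{rad}(A)$ itself is already the required ideal.

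The main obstacle is this last implication. Beyond the appeal to the structural decomposition, the crucial point is the parity observation that $M_n(\mathbb R)$ admits an element squaring to $-I$ only for $n$ even, which is what shears the list of possible simple factors down to something of odd dimension; after that, the parity bookkeeping with $\dim_{\mathbb R}\mathrm{rad}(A)$ is routine, handled by the two-case split above.
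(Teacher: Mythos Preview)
Your arguments for (i)$\Rightarrow$(ii)$\Rightarrow$(iii)$\Rightarrow$(iv) are correct and match the paper's. The contrapositive strategy for (iv)$\Rightarrow$(i) is also essentially the paper's direct argument read backwards, but there is a genuine slip in your execution.

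The claim ``Factors with $D_i\in\{\mathbb C,\mathbb H\}$ are ruled out'' and ``each surviving $n_i$ must therefore be odd'' is false. Recall the paper's convention that subalgebras contain $1$, so an embedding of $\mathbb C$ into $S$ means an element $a\in S$ with $a^2=-1_S$. In a product $S\cong\prod_i S_i$, such an element exists if and only if \emph{every} factor $S_i$ contains an element squaring to $-1_{S_i}$, not if just one does. Concretely, take $A=\mathbb C\times M_3(\mathbb R)$ (with trivial radical): $\mathbb C$ does not embed unitally in $A$, yet a $\mathbb C$ factor is present. So from ``$\mathbb C$ does not embed in $S$'' you cannot conclude that all factors are $M_n(\mathbb R)$ with $n$ odd.

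The fix is immediate and requires no new idea: the correct deduction is that \emph{at least one} factor $S_{i_0}$ fails to contain an element squaring to $-1$. By your determinant observation (together with the obvious embeddings for $\mathbb C$ and $\mathbb H$ factors and the block-diagonal $\left[\begin{smallmatrix}0&1\\-1&0\end{smallmatrix}\right]$ construction for $M_{2k}(\mathbb R)$), this forces $S_{i_0}\cong M_{n}(\mathbb R)$ with $n$ odd, hence $\dim S_{i_0}=n^2$ is odd. Your parity bookkeeping with $\mathrm{rad}(A)$ then goes through unchanged using this single factor. With that correction your proof is complete and is just the contrapositive of the paper's direct argument, which instead assumes (iv), infers that every $S_i$ has even dimension (since both $S_i\oplus\mathrm{rad}(A)$ and $\mathrm{rad}(A)$ are ideals), and then builds the square root of $-1$ componentwise.
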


\begin{proof} 
(i)$\implies$(ii). 
Note that $a^2=-1$ implies $(a-\lambda)(a+\lambda) = -(1+\lambda^2)$.

(ii)$\implies$(iii). Suppose $A$ has a left ideal $I$  of
 odd dimension. Since the linear operator $x\mapsto ax$ maps $I$ to $I$, 
it has a   real eigenvalue  $\lambda$. Thus, $(a-\lambda)v=0$ for some nonzero $v\in I$, showing that
$a-\lambda$ is not invertible.

(iii)$\implies$(iv). Trivial.

(iv)$\implies$(i).  By
Wedderburn's Principal Theorem  (see, e.g., \cite[p.\,191]{Row}),  $A$ is the vector space direct sum of  a semisimple subalgebra $A'$ (isomorphic to $A/{\rm rad}(A)$) and rad$(A)$. Wedderburn's structure theory (see, e.g., \cite[Theorem 2.64]{INCA}) along with Frobenius' theorem tells us that $A'$ is isomorphic to the direct product of (finitely many) simple algebras
$S_i$, each of which is isomorphic to one of the matrix algebras $M_{n}(\mathbb R)$,  $M_{n}(\mathbb C)$, and  $M_{n}(\mathbb H)$. It is enough to show that each $S_i$ contains an element whose square is $-1$ (here, of course, $1$ stands for the unity of $S_i$ rather than the unity of $A$). If $S_i\cong M_n(\mathbb C)$ or
 $S_i\cong M_{n}(\mathbb H)$, then we can simply take the scalar matrix with $i$  on the diagonal. Assume that
$S_i\cong M_n(\mathbb R)$. Note that $S_i \oplus {\rm rad}(A)$ is an ideal of $A$. Since its dimension, as well as the 
dimension of rad$(A)$, is an even number, it follows that so is the dimension of $S_i$. But then $n$ must be even too.
Therefore, $M_n(\mathbb R)$ contains the block diagonal matrix with blocks $\left[ \begin{smallmatrix} 0 & 1 \cr -1 & 0 \cr \end{smallmatrix} \right]$, which  has the desired property.
\end{proof}

\begin{remark}
By the Fundamental Theorem of Algebra, (i) can be equivalently stated as follows:  (i') For every $f(X)\in \mathbb R[X]$ there exists an $a\in A$ such that $f(a)=0$.
\end{remark}

\begin{remark} It is clear from the proof that
the implication (i)$\implies$ (iv)  holds in any nonassociative algebra. This is not true for the implication (iv)$\implies$(i).  Indeed, let $A$  be the $4$-dimensional real space  of self-adjoint $2\times 2$ complex matrices. Endowing $A$ with the product $x\circ y = xy+yx$, $A$ becomes a 
a simple Jordan algebra,  so it has no nontrivial (left) ideals. Thus, $A$ satisfies  (iv) (and (iii)). 
However, it does not  satisfy (i) (and (ii))  since
self-adjoint matrices have real eigenvalues.
\end{remark}

In the next theorem, we consider the situation where $A$ has elements $a,b$ satisfying $a^2=b^2=-1$ and $a\circ b=0$. As is evident from our proof of Frobenius' Theorem,
such a pair of elements generates a subalgebra isomorphic to $\mathbb H$.

\begin{theorem}\label{even3}
Let $A$ be a finite-dimensional  real algebra. The following statements are equivalent:
\begin{enumerate}
\item[{\rm (i)}] $\mathbb H$ embeds in $A$ (i.e., $A$ contains elements   $a,b$ satisfying $a^2=b^2=-1$ and $a\circ b=0$).
\item[{\rm (ii)}] The dimension of any left ideal of  $A$ is a multiple of $4$.
\end{enumerate}
\end{theorem}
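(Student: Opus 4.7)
The plan is to mirror the two-part strategy used for Theorem \ref{even1}. For the forward implication (i) $\Rightarrow$ (ii), I would observe that any left ideal $I$ of $A$ is closed under left multiplication by the embedded copy of $\mathbb{H}$, so $I$ inherits the structure of a left $\mathbb{H}$-module. Since $\mathbb{H}$ is a division algebra, every left $\mathbb{H}$-module is free, and consequently $\dim_{\mathbb{R}} I = 4 \dim_{\mathbb{H}} I$ is a multiple of $4$.

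For the reverse implication (ii) $\Rightarrow$ (i), I would invoke Wedderburn's Principal Theorem together with the structure theorem (combined with Frobenius) as before, writing $A = A' \oplus \mathrm{rad}(A)$ with $A' \cong S_1 \times \cdots \times S_r$ and each $S_i$ isomorphic to $M_{n_i}(\mathbb{R})$, $M_{n_i}(\mathbb{C})$, or $M_{n_i}(\mathbb{H})$. The key preliminary step is to push the divisibility hypothesis down to the simple factors: for any left ideal $L_i \subseteq S_i$, the subspace $L_i + \mathrm{rad}(A)$ is a left ideal of $A$, being the preimage of $L_i$ under the projection $A \to A/\mathrm{rad}(A) \cong A'$. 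Choosing $L_i = 0$ forces $4 \mid \dim \mathrm{rad}(A)$, from which $4 \mid \dim L_i$ follows for every left ideal of every $S_i$.

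I would then handle each $S_i$ by exhibiting $a_i, b_i \in S_i$ with $a_i^2 = b_i^2 = -e_i$ and $a_i \circ b_i = 0$, where $e_i$ denotes the unity of $S_i$: scalar matrices when $S_i \cong M_{n_i}(\mathbb{H})$; block-diagonal copies of the standard embedding $\mathbb{H} \hookrightarrow M_2(\mathbb{C})$ when $S_i \cong M_{n_i}(\mathbb{C})$, which works because a minimal left ideal of $M_{n_i}(\mathbb{C})$ has real dimension $2n_i$, forcing $n_i$ to be even; and block-diagonal copies of the faithful $4$-dimensional real representation of $\mathbb{H}$ when $S_i \cong M_{n_i}(\mathbb{R})$, where a minimal left ideal has real dimension $n_i$, forcing $4 \mid n_i$. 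Since the $e_i$ are pairwise orthogonal and $1_A = \sum e_i$, setting $a = \sum a_i$ and $b = \sum b_i$ yields $a^2 = b^2 = -1_A$ and $a \circ b = 0$ in $A$, as required.

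The most delicate case is $S_i \cong M_{n_i}(\mathbb{R})$: having $\dim S_i = n_i^2$ divisible by $4$ only forces $n_i$ even, which is insufficient, since $\mathbb{H}$ embeds in $M_n(\mathbb{R})$ only when $4 \mid n$. The use of minimal \emph{left} ideals (of real dimension $n_i$) rather than the two-sided $S_i$ (of dimension $n_i^2$) is what upgrades the divisibility from $2$ to $4$, and this is presumably why (ii) is phrased in terms of left ideals rather than ideals.
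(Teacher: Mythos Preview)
Your proposal is correct and follows essentially the same route as the paper: the forward implication uses that a left ideal is a left $\mathbb{H}$-module (the paper gives an explicit induction showing its real dimension is a multiple of $4$, where you invoke freeness over a division ring), and the reverse implication uses Wedderburn's Principal Theorem to reduce to the simple factors $S_i$, pushes the divisibility hypothesis to left ideals of $S_i$ via $L \oplus \mathrm{rad}(A)$, and then exhibits the embedding of $\mathbb{H}$ into $M_{4k}(\mathbb{R})$, $M_{2k}(\mathbb{C})$, and $M_k(\mathbb{H})$ exactly as you describe. Your closing observation about why left ideals (rather than two-sided ideals) are essential in the real case is spot on and is precisely the point the paper makes immediately after the proof.
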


\begin{proof} 
(i)$\implies$(ii). Since $A$ contains a subalgebra isomorphic to $\mathbb H$, every left ideal of $A$ is a finite-dimensional  left $\mathbb H$-module. We claim that the dimension  of any such module $M$ is a multiple of $4$. This follows  from the general theory of modules over simple rings, but let us rather give a short direct proof. We may assume that our claim is true for all modules having smaller dimension than $M$. Take a nonzero $m\in M$. Since $h\mapsto hm$ is a linear isomorphism from $\mathbb H$ onto the submodule $\mathbb H m$, the dimension of $\mathbb H m$ is $4$. By assumption, the dimension of 
the quotient module $M/\mathbb H m$ is a multiple of $4$. But then so is the dimension of $M$.


(ii)$\implies$(i).  Let $A'$ and  $S_i$ be as in the proof of (iv)$\implies$(i) of Theorem \ref{even1}. It is enough to show that each $S_i$ contains a pair of anticommuting elements whose square is $-1$.
If $L$ is a left ideal
of $S_i$, then $L\oplus {\rm rad}(A)$ is a left ideal of $A$. This readily implies that the dimension of $L$ is a multiple of $4$. Consequently,  $S_i$ is isomorphic either to  $M_{4k}(\mathbb R)$, $M_{2k}(\mathbb C)$, or $M_k(\mathbb H)$ for some $k\ge 1$.
Since $\mathbb H$ embeds in $M_4(\mathbb R)$ via the regular representation, and since the complex matrices 
of the form $\left[ \begin{smallmatrix} z & w \cr -\overline{w} & \overline{z} \cr \end{smallmatrix} \right]$ 
 form a subalgebra
of $M_2(\mathbb C)$ isomorphic to $\mathbb H$, in each of the threee cases $S_i$ contains elements with desired properties.
\end{proof}

We cannot substitute "ideal" for "left ideal" in (ii). Indeed, just think of the algebra $M_2(\mathbb R)$ which does not satisfy
the conditions of Theorem \ref{even3}, but its only  two  ideals are of dimensions $0$ and $4$.
 The next and last theorem of this section answers
the now obvious question of which algebras have only ideals whose dimensions are multiples of $4$.

\begin{theorem}\label{even2}
Let $A$ be a finite-dimensional real algebra. The following statements are equivalent:
\begin{enumerate}
\item[{\rm (i)}] $A$ contains invertible elements $u,v$ satisfying $u \circ v =0$.
\item[{\rm (ii)}] $A$ contains  elements $a,b$ satisfying $a^2=-1$, $b^4=1$, and $a\circ b=0$.
\item[{\rm (iii)}] The dimension of any ideal of  $A$ is a multiple of $4$.
\end{enumerate}
\end{theorem}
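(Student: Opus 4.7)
The implication (ii)$\Rightarrow$(i) is immediate: $a^2 = -1$ gives $a^{-1} = -a$, while $b^4 = 1$ gives $b^{-1} = b^3$, so $u = a$, $v = b$ witness (i). The implications (i)$\Rightarrow$(ii) and (iii)$\Rightarrow$(ii) both proceed by Wedderburn reduction, in the spirit of the proofs of Theorems~\ref{even1} and~\ref{even3}. Writing $A = A' \oplus \mathrm{rad}(A)$ with $A' = \prod_i S_i$ a product of simple algebras $S_i \in \{M_{n_i}(\mathbb R), M_{n_i}(\mathbb C), M_{n_i}(\mathbb H)\}$, both arguments reduce to showing that each $\dim_{\mathbb R} S_i$ is divisible by $4$; once this is known, each $S_i$ contains elements with the properties in (ii) --- block-diagonal copies of $\left[\begin{smallmatrix} 0 & 1 \cr -1 & 0 \cr \end{smallmatrix}\right]$ and $\left[\begin{smallmatrix} 1 & 0 \cr 0 & -1 \cr \end{smallmatrix}\right]$ in the real and complex cases, and the scalar matrices $iI_{n_i}$, $jI_{n_i}$ in the quaternionic case --- whose sums produce $a, b \in A' \subseteq A$ satisfying (ii). For (iii)$\Rightarrow$(ii), the divisibility follows from the fact that $\mathrm{rad}(A)$ and $S_i \oplus \mathrm{rad}(A)$ are both ideals, hence of dimension divisible by $4$ by assumption. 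For (i)$\Rightarrow$(ii), one projects $u, v$ to invertible anticommuting $u_i, v_i \in S_i$; when $S_i \in \{M_{n_i}(\mathbb R), M_{n_i}(\mathbb C)\}$, the determinant computation $\det(u_i v_i) = \det(-v_i u_i) = (-1)^{n_i} \det(u_i v_i)$ forces $n_i$ to be even, while $\dim_{\mathbb R} M_n(\mathbb H) = 4n^2$ is automatically divisible by~$4$.

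The main work, and the step I expect to be the main obstacle, is (ii)$\Rightarrow$(iii). Let $C = \mathbb R\langle a, b\rangle \subseteq A$. The relation $ab + ba = 0$ gives $ab^2 = -bab = b^2 a$, so $b^2$ lies in the centre of $C$, and $(b^2)^2 = b^4 = 1$. If $b^2 = 1$, the universal algebra on two generators $x, y$ subject to $x^2 = -1$, $y^2 = 1$, $xy + yx = 0$ is spanned by $\{1, x, y, xy\}$ and admits a surjection onto $M_2(\mathbb R)$ via $x \mapsto \left[\begin{smallmatrix} 0 & 1 \cr -1 & 0 \cr \end{smallmatrix}\right]$, $y \mapsto \left[\begin{smallmatrix} 1 & 0 \cr 0 & -1 \cr \end{smallmatrix}\right]$, hence is isomorphic to $M_2(\mathbb R)$ by dimension count; since $M_2(\mathbb R)$ is simple and $C \ne 0$, one obtains $C \cong M_2(\mathbb R)$. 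Analogously, $b^2 = -1$ yields $C \cong \mathbb H$. Otherwise the elements $e_\pm = (1 \pm b^2)/2$ are nontrivial orthogonal central idempotents in $C$ (since $1 \pm b^2 \ne 0$) and split $C = Ce_+ \oplus Ce_-$; the computations $(be_+)^2 = e_+$ and $(be_-)^2 = -e_-$ reduce each summand to the previous cases, giving $Ce_+ \cong M_2(\mathbb R)$ and $Ce_- \cong \mathbb H$. In every case $C$ is semisimple with simple components in $\{M_2(\mathbb R), \mathbb H\}$.

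To conclude, the classical isomorphisms $M_2(\mathbb R) \otimes_{\mathbb R} M_2(\mathbb R) \cong M_4(\mathbb R)$, $\mathbb H \otimes_{\mathbb R} \mathbb H \cong M_4(\mathbb R)$, and $M_2(\mathbb R) \otimes_{\mathbb R} \mathbb H \cong M_2(\mathbb H)$, together with $M_2(\mathbb R)^{\mathrm{op}} \cong M_2(\mathbb R)$ and $\mathbb H^{\mathrm{op}} \cong \mathbb H$, show that $C \otimes_{\mathbb R} C^{\mathrm{op}}$ is a finite product of copies of $M_4(\mathbb R)$ and $M_2(\mathbb H)$, and is therefore semisimple. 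Its simple modules are $\mathbb R^4$ and $\mathbb H^2$, of $\mathbb R$-dimensions $4$ and $8$. Since $A$ is a $C$-bimodule via the inclusion $C \hookrightarrow A$, it is a left $C \otimes_{\mathbb R} C^{\mathrm{op}}$-module, and any ideal $J$ of $A$ --- being closed under left and right multiplication by elements of $C$ --- is a submodule. Semisimplicity then decomposes $J$ as a direct sum of the above simple modules, each of $\mathbb R$-dimension divisible by~$4$, giving $4 \mid \dim_{\mathbb R} J$ as required.
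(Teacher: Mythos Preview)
Your proof is correct. The Wedderburn-based implications match the paper's arguments almost verbatim (the paper uses the block $\left[\begin{smallmatrix}0&1\cr 1&0\end{smallmatrix}\right]$ instead of your $\left[\begin{smallmatrix}1&0\cr 0&-1\end{smallmatrix}\right]$, but this is immaterial), and your cycle of implications, though organized differently, covers the same ground.

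The genuine divergence is in (ii)$\Rightarrow$(iii). The paper's argument is more elementary and entirely internal to $A$: it first reduces to showing $4\mid\dim A$ by observing that $A/J$ inherits condition (ii) for any ideal $J$, then exploits the $\mathbb Z_2$-grading $A=A_0\oplus A_1$ determined by $a$ (with $A_0$ the centralizer of $a$ and $A_1$ the anticentralizer). Since $b\in A_1$ is invertible, left multiplication by $b$ gives a linear isomorphism $A_0\to A_1$, so $\dim A=2\dim A_0$; and since $a\in A_0$ has $a^2=-1$, Theorem~\ref{even1} forces $\dim A_0$ to be even. Your approach instead classifies the subalgebra $C=\mathbb R\langle a,b\rangle$ as one of $M_2(\mathbb R)$, $\mathbb H$, or $M_2(\mathbb R)\times\mathbb H$, and then uses that every $(C,C)$-bimodule --- equivalently, every $C\otimes_{\mathbb R}C^{\mathrm{op}}$-module --- has $\mathbb R$-dimension divisible by $4$. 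This is heavier machinery (tensor products, the identifications $\mathbb H\otimes\mathbb H\cong M_4(\mathbb R)$ and $M_2(\mathbb R)\otimes\mathbb H\cong M_2(\mathbb H)$, semisimple module theory) but it gives a clean structural reason for the divisibility and handles every ideal $J$ directly, without the quotient reduction. The paper's route is shorter and keeps the prerequisites minimal; yours makes the role of the ``quaternion-like'' subalgebra $C$ more transparent.
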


\begin{proof} 
(i)$\implies$(ii). Assume first that $A$ is semisimple. Then $A$ is isomorphic to the direct product of simple algebras
$S_i$, each of which satisfies (i).  It is enough to show that each of them satisfies (ii). 
If $S_i$ is isomorphic to $M_n(\mathbb H)$, then this is clear. Indeed,  take $a$ (resp. $b$) to be the scalar matrix with $i$ (resp. $j$)  on the diagonal, and observe that
 $a\circ b =0$ and  $a^2=b^2=-1$. 
We may thus assume that $S_i$ is isomorphic to  $M_n(\mathbb F)$ with $\mathbb F\in \{\mathbb R,\mathbb C\}$. From $uv = -vu$ we get $\det(u)\det(v) = (-1) ^n\det(v)\det(u)$, so $n$ must be an even number. Thus, we can  define $a$ to be the block diagonal matrix with blocks $\left[ \begin{smallmatrix} 0 & 1 \cr -1 & 0 \cr \end{smallmatrix} \right]$ and $b$
to be 
 the block diagonal matrix with blocks $\left[ \begin{smallmatrix} 0 & 1 \cr 1 & 0 \cr \end{smallmatrix} \right]$. Observe that $a\circ b =0$,  $a^2=-1$, and $b^2 =1$.

Let now $A$ be arbitrary. 
By Wedderburn's Principal Theorem, 
$A$ contains a subalgebra $A'$ isomorphic to  $A/{\rm rad}(A)$ such that
 $A=A'\oplus {\rm rad}(A)$. By the preceding paragraph, $A'$ contains a pair of elements satisfying (ii). But then  so does
$A$.

(ii)$\implies$(iii). It is enough to prove that the dimension of $A$ is a multiple of $4$. Indeed, if $I$ is an ideal of $A$,
 then $A/I$ also satisfies (ii). Thus, knowing that the dimensions of  $A$ and $A/I$ are  multiples of $4$, it follows that the same is true for $\dim I = \dim A - \dim A/I$.

We begin our proof that  the dimension of $A$ is a multiple of $4$ by noticing that $A_0 = \{x_0\in A\,|\, ax_0=x_0a\}$ is a subalgebra of $A$ containing all elements of the form $x-axa$ with $x\in A$, and $A_1 =\{x_1\in A\,|\, ax_1=-x_1a\}$ is an $A_0$-bimodule containing all elements of the form $x+axa$ with $x\in A$. Moreover, $A_1^2 \subseteq A_0$.
Since $A_0\cap A_1 = \{0\}$ and  $$x = \frac{1}{2}(x - axa) +  \frac{1}{2}(x + axa)$$
holds for every $x\in A$, it follows that
$A=A_0\oplus A_1$  (so $A_0$ and $A_1$ give rise to a $\mathbb Z_2$-grading of $A$). By assumption, $b\in A_1$ and so $bx_0\in A_1$ for every $x_0\in A_0$. Conversely, if $x_1\in A_1$, then we see from $b^{-1}=b^3\in A_1$ that $b^{-1}x_1\in A_0$, and so $x_1$ is of the form $bx_0$ with $x_0\in A_0$. Thus, $x_0\mapsto bx_0$ is a linear isomorphism from $A_0$ onto $A_1$, implying that $\dim A = \dim A_0 + \dim A_1 = 2\dim A_0$. Since $a\in A_0$ and 
 $a^2=-1$, Theorem \ref{even1} tells us that $\dim A_0$ is an even number. Consequently, $\dim A$ is a multiple of $4$.

(iii)$\implies$(i). 
By Wedderburn's Principal Theorem, $A=A'\oplus {\rm rad}(A)$ where $A'$ is a semisimple algebra. Let $S_i$
be simple algebras whose direct product is $A'$. Since  $S_i\oplus {\rm rad}(A)$
and rad$(A)$ are ideals of $A$ and so their dimensions  are multiples of $4$, the same holds for the dimension of $S_i$. But then $S_i$ is isomorphic either
 to $M_{2k}(\mathbb R)$, 
$M_{2k}(\mathbb C)$ or $M_{k}(\mathbb H)$ for some $k\ge 1$. In each case, we can find a pair of anticommuting invertible elements in $S_i$ (cf. the proof of (i)$\implies$(ii)). These elements obviously yield a  pair of anticommuting invertible elements in $A'\subseteq A$.
\end{proof}

\section{Polynomials Having Liftable Roots}\label{s4}

Throughout this section, $F$ will be a field and $A$ will be an algebra over $F$. 
Recall that an ideal $I$ of $A$ is said to be  nil if all of its elements are nilpotent. Nilpotent ideals 
(i.e., ideals $I$ such that $I^n=0$ for some $n\ge 1$)
 are obviously nil, but nil ideals may not be nilpotent.

We say that idempotents can be lifted modulo an ideal $I$ of $A$ if every idempotent $b$ in $A/I$ is of the form
$e+I$ where $e$ is an idempotent in $A$.  A classical result in ring theory states 
that idempotents can   be lifted modulo any  nil ideal  (see, e.g., \cite[Theorem 21.28]{Lam}). 
This means that the polynomial $X^2-X$ has the property described in 
 the following definition.

\begin{definition} A polynomial $f=f(X)\in F[X]$ has {\em nil-liftable roots over $F$} if, for every  $F$-algebra
$A$ and every nil ideal $I$ of $A$,  any element $b\in A$ satisfying 
$f(b)\in I$ is of the form $b = a + u$ where $u\in I$ and $a\in A$ is such that
$f(a)=0$.
\end{definition}

In other words,
$f$ has   nil-liftable roots if, for every nil ideal $I$ of any $F$-algebra $A$, the following holds:
 if $\overline{b}=b + I\in A/I$ is such that $f(\overline{b}) =0$, then there exists an $a\in A$ such that
$f(a) =0$ and $\overline{b} = \overline{a}\, (=a+I)$. 

Of course, not every idempotent can be lifted modulo an ideal. For example, the polynomial algebra $F[X]$ has no idempotents different from $0$ and $1$, but the quotient algebra $F[X]/\mathcal I$, where $\mathcal I$ is the principal ideal generated by 
$X^2-X$, has  an idempotent $X + \mathcal I$. 
On the other hand,
as a byproduct of the general theory of lifting idempotents \cite{N} we have that idempotents can  be lifted modulo any ideal of a finite-dimensional algebra. That is, the polynomial $X^2-X$ has also the following  
 property.

\begin{definition} A polynomial $f=f(X)\in F[X]$ has {\em liftable roots over $F$ in finite dimensions} if, for every finite-dimensional $F$-algebra
$A$ and every  ideal $I$ of $A$,  any element $b\in A$ satisfying 
$f(b)\in I$ is of the form $b = a + u$ where $u\in I$ and $a\in A$ is such that
$f(a)=0$.
\end{definition}

These two definitions suggest the problem of finding other algebraic elements, different from idempotents,  that can 
be lifted modulo certain ideals. This problem has been studied in some algebras of functional analysis---see \cite{Bar, Had, TS} and references therein. Among these papers, only 
\cite{Bar} seem to have  a (small) overlap with the results that we are about to establish.
On the other hand,  we were unable to find similar results in pure algebra. Perhaps we can mention the paper \cite{KLN} which considers the polynomial $X^2-1$, but discusses problems  of  a somewhat different nature. In fact,
if the characteristic of $F$ is different from  $2$, then
 the problem of 
lifting idempotents (i.e., roots of $X^2-X$) modulo $I$ is equivalent to the problem of lifting roots of $X^2-1$ modulo $I$.
This is because $e$ is an idempotent if and only if $1-2e$ has square $1$. The following simple example shows that
the polynomial $X^2+1$ is quite different with respect to the lifting problem.

\begin{example}\label{ecr}
 The real algebra $\mathbb C\times \mathbb R$ has no elements with square $-1$, but its homomorphic image $\mathbb C$ does. This shows that the polynomial $X^2 +1$ does not have  liftable roots over $\mathbb R$ in finite dimensions. 
\end{example}

However,  $X^2 +1$ has  liftable roots over $\mathbb C$ in finite dimensions. Indeed, since linear polynomials 
obviously have liftable roots (in finite or infinite dimensions), this follows from the next lemma.

\begin{lemma}\label{lv2}
If relatively prime polynomials $f_1,f_2\in F[X]$ have liftable roots over $F$ in finite dimensions, then so does their product  $f=f_1f_2$. 
\end{lemma}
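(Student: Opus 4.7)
The plan is to use a Bezout-style idempotent decomposition to split the lifting problem for $f$ into separate lifting problems for $f_1$ and $f_2$. Let $A$ be a finite-dimensional $F$-algebra, $I$ an ideal of $A$, and $b\in A$ with $f(b)\in I$; set $\overline{A}=A/I$ and $\overline{x}=x+I$. Using relative primality, pick $g_1,g_2\in F[X]$ with $g_1f_1+g_2f_2=1$ and put $\overline{e_1}=g_2(\overline{b})f_2(\overline{b})$, $\overline{e_2}=g_1(\overline{b})f_1(\overline{b})$. A routine computation, using only $f_1(\overline{b})f_2(\overline{b})=0$ and the Bezout identity, shows that $\overline{e_1},\overline{e_2}$ are orthogonal idempotents in $\overline{A}$ summing to $1$, both commuting with $\overline{b}$, and satisfying $\overline{e_i}f_i(\overline{b})=0$.

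Next, invoking the fact already cited in the paper that idempotents lift modulo any ideal of a finite-dimensional algebra, I would lift $\overline{e_1}$ to an idempotent $e_1\in A$ and set $e_2=1-e_1$. Form the corner algebras $e_iAe_i$, which are finite-dimensional $F$-algebras with unit $e_i$, and the Peirce truncations $b_i=e_ibe_i\in e_iAe_i$. Since $\overline{e_i}$ commutes with $\overline{b}$ and $\overline{e_1}+\overline{e_2}=1$, one checks that $b-b_1-b_2\in I$. Evaluating $f_i$ in $e_iAe_i$ (so that the constant term is $f_i(0)e_i$, not $f_i(0)\cdot 1$), the image of $b_i$ in the quotient $\overline{e_i}\,\overline{A}\,\overline{e_i}\cong e_iAe_i/e_iIe_i$ satisfies $f_i(\overline{b_i})=\overline{e_i}f_i(\overline{b})=0$, whence $f_i(b_i)\in e_iAe_i\cap I=e_iIe_i$. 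Now apply the liftability hypothesis for $f_i$ to the pair $(e_iAe_i,e_iIe_i)$ to obtain $a_i\in e_iAe_i$ with $f_i(a_i)=0$ (in $e_iAe_i$) and $b_i-a_i\in e_iIe_i$.

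Finally, set $a=a_1+a_2$. Because $e_1e_2=0$, we have $a_1a_2=a_2a_1=0$, so $a^n=a_1^n+a_2^n$ for $n\ge 1$; evaluating in $A$ one finds that $f_1(a)$ lies in $e_2Ae_2$ and $f_2(a)$ lies in $e_1Ae_1$ (the $e_iAe_i$-component of $f_i(a)$ vanishing by the choice of $a_i$), and therefore $f(a)=f_1(a)f_2(a)\in e_2Ae_2\cdot e_1Ae_1=\{0\}$. Moreover $b-a=(b-b_1-b_2)+(b_1-a_1)+(b_2-a_2)\in I$. The main obstacle is the bookkeeping in the middle paragraph: keeping track of which identity ($1_A$ or $e_i$) is used when evaluating polynomials, and verifying that the Peirce truncation $b_i$ really is a root of $f_i$ modulo $e_iIe_i$ in the corner algebra, which is precisely what allows the hypothesis on $f_i$ to be applied.
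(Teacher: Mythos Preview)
Your proof is correct and follows essentially the same strategy as the paper's (Bezout identity, lift the resulting idempotent, split the problem into two pieces, apply the hypothesis on each $f_i$, recombine). The only difference is that the paper first replaces $A$ by the commutative subalgebra generated by $b$, so that the lifted idempotents are central and one can work in $e_iA$ rather than the Peirce corner $e_iAe_i$; this shortcut eliminates exactly the bookkeeping you flag in your final paragraph.
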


\begin{proof}
Let $A$ be a   finite-dimensional $F$-algebra and let $I$ be an ideal of $A$.
Take $b\in A$ such that $f(b)=f_1(b)f_2(b)\in I$. Replacing, if necessary, $A$ by the subalgebra $B$ of $A$ generated by $b$ together with replacing $I$ by
$I\cap B$, we see that there is no loss of generality in assuming that $A$ is commutative.

Since $f_1$ and $f_2$ are relatively prime, there exist $g_1,g_2\in F[X]$ such that
$g_1f_1+g_2f_2 =1,$ and hence
 $$g_1(b)f_1(b)+g_2(b)f_2(b) =1.$$
Multiplying by $g_1(b)f_1(b)$, we get
 $$\big(g_1(b)f_1(b)\big)^2 - g_1(b)f_1(b) \in I.$$
Since  idempotents can be lifted modulo any ideal of a  finite-dimensional algebra \cite{N}, there exists an idempotent  $e_1\in A$ satisfying
$$e_1-g_1(b)f_1(b) \in I.$$ Setting $e_2 = 1-e_1$, we thus have
 $$e_2-g_2(b)f_2(b)\in I.$$
Hence,
 $$e_2f_1(b) = \big(e_2-g_2(b)f_2(b)\big) f_1(b) + g_2(b)f(b) \in I.$$
 Multiplying this relation by $e_2$, we see that
 $e_2f_1(b)\in e_2I.$
 Similarly, $e_1f_2(b)\in e_1I.$
  
  Let us now  consider the (finite-dimensional) algebra  $A_2 = e_2A$ and its ideal $I_2 = e_2I$. Since $e_2$ is the unity of
  $A_2$,
   $f_1(e_2b)$ is equal to $e_2f_1(b)$, and therefore belongs to $I_2$.  
Using the assumption that $f_1$ has liftable roots,  it follows that
 $e_2b = a_2+u_2$ where $u_2\in I_2$, $a_2\in A_2$, and $f_1(a_2) = 0$. Similarly, $e_1b = a_1 +u_1$ with
  $u_1\in I_1=e_1I$, $a_1\in A_1=e_1A$, and $f_2(a_1) = 0$. Set $u=u_1+u_2\in I$ and $a=a_1+a_2\in A$.
Then $b-a\in I$. Moreover, using
$e_1a_2 =e_2a_1=0$,  we see that
 \begin{align*}f(a)& = e_1f(a)+ e_2f(a) = e_1f(e_1a) + e_2f(e_2a)\\
 &= e_1f(e_1a_1) + e_2f(e_2a_2) = e_1f(a_1) + e_2f(a_2) =0,
\end{align*}
as desired.
\end{proof}

Essentially the same proof shows that this lemma also holds for nil-liftable roots. However, we will not need that; in fact, the next lemma will yield a result that tells us much more.

 Recall that a nonzero  polynomial $f\in F[X]$ is said to be  separable if  its roots in the splitting field are all distinct.
 Equivalently, $f$ and its derivative $f'$ are relatively prime in $F[X]$.

\begin{lemma}\label{ls}
A nonzero polynomial  $f\in F[X]$ is  separable  if and only if there exist polynomials $h,k\in F[X]$ such that
$$f\big(X - h(X)f(X)\big) = k(X)f(X)^2.$$
\end{lemma}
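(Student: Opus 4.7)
The plan is to rewrite the displayed identity as a B\'ezout relation for $f$ and $f'$ in $F[X]$, since separability is by definition equivalent to $\gcd(f, f') = 1$.

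The key tool is the formal Taylor expansion: for any $f \in F[X]$, there exists a polynomial $g \in F[X, Y]$ with
$$f(X + Y) = f(X) + f'(X) Y + Y^2 g(X, Y).$$
This is elementary and works in arbitrary characteristic: expand $(X+Y)^n$ via the binomial theorem, read off the coefficient of $Y^0$ as $f(X)$ and of $Y^1$ as $f'(X)$, and absorb the remaining terms into $Y^2 g(X, Y)$. Substituting $Y = -h(X) f(X)$ gives
$$f\bigl(X - h(X) f(X)\bigr) = f(X)\bigl(1 - f'(X) h(X)\bigr) + h(X)^2 f(X)^2\, g\bigl(X, -h(X) f(X)\bigr).$$

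Therefore the sought identity $f(X - h(X) f(X)) = k(X) f(X)^2$ holds for some $k \in F[X]$ if and only if $f(X)\bigl(1 - f'(X) h(X)\bigr) \in f(X)^2 F[X]$. Since $F[X]$ is an integral domain and $f \ne 0$, cancelling one factor of $f$ shows this is equivalent to the existence of $m \in F[X]$ with
$$f'(X) h(X) + f(X) m(X) = 1,$$
with the translation $k = m + h(X)^2 g(X, -h(X) f(X))$. Thus the existence of $h, k$ as in the statement is precisely equivalent to coprimality of $f$ and $f'$ in $F[X]$, i.e., to separability of $f$. For the forward implication one takes the B\'ezout pair $(h, m)$ furnished by separability and defines $k$ by the above formula; for the reverse one reads off $m$ from $k$ in the same way.

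I do not foresee a substantive obstacle here. The only mildly delicate points are recording the formal Taylor expansion (which is purely combinatorial and needs no division) and applying cancellation in the domain $F[X]$; once the substitution $Y = -h(X) f(X)$ is made, the equivalence with a B\'ezout identity is immediate.
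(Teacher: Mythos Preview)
Your argument is correct. The forward (``only if'') direction is essentially the paper's: both use the formal Taylor expansion $f(X+Y)=f(X)+f'(X)Y+Y^2\widetilde f(X,Y)$, substitute $Y=-h(X)f(X)$, and invoke a B\'ezout relation $gf+hf'=1$ to absorb the linear term into $f^2$. The converse, however, is handled differently. The paper passes to the splitting field, writes $f(X)=(X-\beta)^k q(X)$ with $k>1$ and $q(\beta)\ne 0$, substitutes into the displayed identity, and derives the contradiction $q(\beta)=0$. You instead stay entirely in $F[X]$: from the Taylor expansion you read off that $f(X-hf)=kf^2$ forces $f(1-f'h)\in f^2F[X]$, cancel one factor of $f$ in the domain $F[X]$, and recover a B\'ezout relation $f'h+fm=1$ directly. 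Your route is more uniform---it treats both implications as a single equivalence with the B\'ezout condition---and avoids any extension of scalars; the paper's route is slightly more concrete in that it pinpoints exactly how a repeated root obstructs the identity.
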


\begin{proof}   Using the binomial theorem, we see that 
 there exists a polynomial
$\widetilde{f}(X,Y)\in F[X,Y]$  such that
$$f(X+Y)= f(X) + f'(X)Y + \widetilde{f}(X,Y)Y^2.$$
Now suppose that $f$ is separable. Then $f$ and $f'$ are relatively prime, so $gf + hf'=1$ for some polynomials $g,h\in F[X]$. Hence,
\begin{align*}
&f\big(X - h(X)f(X)\big)\\
 =& 
 f(X) - f'(X) h(X)f(X) + \widetilde{f}\big(X, -h(X)f(X)\big) h(X)^2f(X)^2\\
=& \Big(g(X)   + \widetilde{f}\big(X, -h(X)f(X)\big) h(X)^2\Big)f(X)^2,
\end{align*}
which proves the "only if" part.

To prove the converse, assume that  polynomials $h$ and $k$ are as in the statement of the lemma. 
Suppose that $f$ is not separable. Then there exist an element $\beta$ in the splitting field $K$ for $f$ over $F$, a polynomial 
$q(X)\in K[X]$ such that $q(\beta)\ne 0$, and $k>1$ such that
$$f(X)=(X-\beta)^k q(X).$$
Accordingly,
$$\big(X - h(X)f(X) - \beta\big)^k q\big(X - h(X)f(X)\big) =k (X)(X-\beta)^{2k}q(X)^2.
$$
Since 
$$X - h(X)f(X) - \beta = (X-\beta)\Big(1- h(X)q(X)(X-\beta)^{k-1}\Big),$$
it follows that
$$\Big(1- h(X)q(X)(X-\beta)^{k-1}\Big)^k q\big(X - h(X)f(X)\big) = k (X)(X-\beta)^{k}q(X)^2.$$
Evaluating at $\beta$ gives $q(\beta)=0$, a contradiction.
\end{proof}

We will  need only the "only if" part of this lemma, which can be sharpened as follows.

\begin{lemma}\label{ls2} Let $f\in F[X]$ be a  separable polynomial.
 Then for every $n\ge 1$  there exist polynomials $h_n,k_n\in F[X]$ such that
$$f\big(X - h_n(X)f(X)\big) = k_n(X)f(X)^{2^n}.$$
\end{lemma}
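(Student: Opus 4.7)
The plan is to prove the lemma by induction on $n$, using Lemma~\ref{ls} both as the base case and as the engine that drives the inductive step. For $n=1$, Lemma~\ref{ls} directly supplies polynomials $h_1, k_1 \in F[X]$ satisfying $f(X - h_1(X) f(X)) = k_1(X) f(X)^2$. Writing $P_n(X) := X - h_n(X) f(X)$, the inductive hypothesis at level $n$ reads $f(P_n(X)) = k_n(X) f(X)^{2^n}$.

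The key observation is that the base-case identity $f(Y - h_1(Y) f(Y)) = k_1(Y) f(Y)^2$ is a polynomial identity in the indeterminate $Y$, so I may substitute $Y = P_n(X)$. Combining this substitution with the inductive hypothesis gives
\[
f\bigl(P_n(X) - h_1(P_n(X)) f(P_n(X))\bigr) = k_1(P_n(X)) f(P_n(X))^2 = k_1(P_n(X)) k_n(X)^2 f(X)^{2^{n+1}}.
\]
The argument on the left-hand side expands as $X - h_n(X) f(X) - h_1(P_n(X)) k_n(X) f(X)^{2^n}$, which has the desired form $X - h_{n+1}(X) f(X)$ once I define
\[
h_{n+1}(X) := h_n(X) + h_1(P_n(X)) k_n(X) f(X)^{2^n - 1}, \qquad k_{n+1}(X) := k_1(P_n(X)) k_n(X)^2.
\]

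The only bookkeeping to check is that $h_{n+1}$ is a genuine polynomial, which amounts to $2^n - 1 \geq 0$; this is trivial for $n \geq 1$. There is no substantive obstacle here: the construction is a polynomial incarnation of Newton's iteration, since $h_1(Y)$ plays the role of $1/f'(Y)$ modulo $f(Y)$, so the map $Y \mapsto Y - h_1(Y) f(Y)$ at least squares the order of vanishing of $f$ at a root. Iterating this map thus doubles the exponent of $f(X)$ at each step, yielding $f(X)^{2^n}$ after the required number of iterations.
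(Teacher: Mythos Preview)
Your proof is correct and follows essentially the same inductive strategy as the paper: use Lemma~\ref{ls} for the base case, then compose the Newton-type map $X \mapsto X - h(X)f(X)$ with itself to double the exponent of $f(X)$ at each step. The only cosmetic difference is the order of composition---the paper substitutes $r_1(X)$ into the level-$n$ identity (so it works with $f(r_n(r_1(X)))$ and obtains $h_{n+1}(X)=h_1(X)+h_n(r_1(X))k_1(X)f(X)$, $k_{n+1}(X)=k_n(r_1(X))k_1(X)^{2^n}$), whereas you substitute $P_n(X)$ into the level-$1$ identity; both routes yield the required $f(X)^{2^{n+1}}$ for the same reason.
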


\begin{proof}
 Lemma \ref{ls} covers the case where $n=1$. Assume, therefore, that $h_i,k_i$, $i=1,\dots,n$, exist, and let us 
 prove the existence of $h_{n+1}$ and $k_{n+1}$. Set $r_i(X)= X - h_i(X)f(X)$, $i=1,\dots,n$. We have
\begin{align*}
f(r_n(r_1(X))) = k_n(r_1(X))f(r_1(X))^{2^n}= k_{n+1}(X) f(X)^{2^{n+1}}
\end{align*}
where $k_{n+1}(X)=k_n(r_1(X))k_1(X)^{2^n}$, 
and
$$r_n(r_1(X)) = r_1(X) - h_n(r_1(X))f(r_1(X)) = X- h_{n+1}(X)f(X),$$
where $h_{n+1}(X) =
 h_1(X)+ h_n(r_1(X))k_1(X)f(X)$.
\end{proof}

We are now  in a position to prove the main theorem of this section.

\begin{theorem}\label{tc}
Let $F$ be a field, let $f\in F[X]$ be a nonzero polynomial, and let $K$ be the splitting field for $f$ over $F$. The following statements are equivalent:
\begin{enumerate}
\item[{\rm (i)}] 
 $f$ is separable.
\item[{\rm (ii)}] 
 $f$ has nil-liftable roots over $F$.
\item[{\rm (iii)}] 
 $f$ has liftable roots over $K$ in finite dimensions.
\end{enumerate}
\end{theorem}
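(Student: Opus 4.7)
The strategy is to prove the equivalences (i) $\Leftrightarrow$ (ii) and (i) $\Leftrightarrow$ (iii) separately, leaning on the three lemmas already proved.

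For (i) $\Rightarrow$ (ii), I would run a Newton-style iteration. Given an $F$-algebra $A$, a nil ideal $I$, and $b\in A$ with $f(b)\in I$, nilness of $I$ forces $f(b)^{2^n}=0$ for some $n$. Lemma \ref{ls2} supplies polynomials $h_n,k_n\in F[X]$ with $f(X-h_n(X)f(X))=k_n(X)f(X)^{2^n}$; setting $a:=b-h_n(b)f(b)$ and evaluating this identity at $b$ (which is legitimate since $F[b]\subseteq A$ is commutative) yields $f(a)=k_n(b)f(b)^{2^n}=0$ while $a-b=-h_n(b)f(b)\in I$, as required. For (i) $\Rightarrow$ (iii), separability of $f$ means it factors in $K[X]$ as $c(X-\alpha_1)\cdots(X-\alpha_d)$ with pairwise distinct $\alpha_i$; each linear factor trivially has liftable roots over $K$ in finite dimensions (take $a=\alpha_i$), and since the factors are pairwise coprime in $K[X]$, iterated application of Lemma \ref{lv2} finishes the job.

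The two converse directions are handled by a single universal counter-example. For (ii) $\Rightarrow$ (i) I take $A=F[X]/(f(X)^2)$ and for (iii) $\Rightarrow$ (i) I take $A=K[X]/(f(X)^2)$; in either case $A$ is a finite-dimensional commutative algebra and $I=(f(X))/(f(X)^2)$ is an ideal with $I^2=0$, hence nil. The element $b=X+(f^2)$ satisfies $f(b)\in I$, so the lifting hypothesis produces an $a\in A$ with $a-b=-h(X)f(X)+(f^2)$ and $f(a)=0$ for some polynomial $h$. Lifted back to $F[X]$ or $K[X]$, this says $f(X-h(X)f(X))=k(X)f(X)^2$ for some polynomial $k$, and the converse half of Lemma \ref{ls} then delivers separability of $f$; for (iii) this is legitimate because separability of $f\in F[X]$ is invariant under the extension $F\subseteq K$.

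I do not expect a serious obstacle here, since the substantive content has been packaged into Lemmas \ref{lv2}, \ref{ls}, and \ref{ls2}. The only subtlety is noticing that both halves of Lemma \ref{ls} are used in the argument (even though only the "only if" direction was needed to derive Lemma \ref{ls2}), and that the quotient $F[X]/(f^2)$ is the natural universal object for detecting failure of the lifting property.
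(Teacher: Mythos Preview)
Your forward directions (i)$\Rightarrow$(ii) and (i)$\Rightarrow$(iii) match the paper exactly: Lemma~\ref{ls2} for the nil case, and Lemma~\ref{lv2} applied inductively to the distinct linear factors over $K$ for the finite-dimensional case.

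For the converse directions your argument is correct but genuinely different from the paper's. The paper does not use the ``if'' half of Lemma~\ref{ls} at all (it even remarks that only the ``only if'' half is needed); instead, assuming $f$ is inseparable with a repeated root $\beta_1$ of multiplicity $k_1\ge 2$, it builds an explicit counterexample in the algebra of $(k_1+1)\times(k_1+1)$ upper triangular matrices over $K$, taking $I$ to be the one-dimensional corner ideal $K e_{1,k_1+1}$ and $b=\beta_1+e_{12}+\dots+e_{k_1,k_1+1}$, then checks directly that $f(b+y)=f(b)\ne 0$ for every $y\in I$. Your route via $A=F[X]/(f^2)$ (resp.\ $K[X]/(f^2)$) is more canonical: it is the universal test object for the lifting property, and it lets the converse half of Lemma~\ref{ls} do the work rather than a hands-on matrix computation. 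The price is that your argument genuinely needs both directions of Lemma~\ref{ls}, whereas the paper's matrix example is self-contained once inseparability is assumed. Either way the proof is complete; your version arguably explains better \emph{why} Lemma~\ref{ls} was stated as an equivalence.
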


\begin{proof}
(i)$\implies$(ii). This follows from Lemma \ref{ls2}.
Indeed, let $A$ be an $F$-algebra with nil ideal $I$, and let  $b\in A$ be such that $f(b) \in I$. 
Then $f(b)^{2^n}=0$ for some $n\ge 1$, and so $a= b -h_n(b)f(b)$ satisfies $f(a)=0$ and 
$a-b\in I$.

(i)$\implies$(iii). Since linear polynomials obviously have nil-liftable roots, this follows from  Lemma \ref{lv2}. 

(ii)$\implies$(i) and (iii)$\implies$(i).
Suppose  $f$ is not separable. Then
$$f(X)= \lambda(X-\beta_1)^{k_1}(X-\beta_2)^{k_2}\cdots (X-\beta_r)^{k_r}$$ where 
$\beta_i\in K$ are the distinct roots of $f$, $\lambda$ is the leading coefficient of $f$,
 and $k_i$ are positive integers such that at least one of them, say $k_1$, is greater than $1$. Let $A$ be the algebra of 
all $(k_1+1)  \times (k_1+1)$ upper triangular matrices over $K$; we may view $A$ as a (finite-dimensional) algebra over $K$ as well as over $F$.
 As usual, we denote by $e_{ij}$ 
the standard matrix units. 
Let $I$ be the ideal consisting of matrices of the form  $\gamma e_{1,k_1+1}$ with $\gamma\in K$.
Observe that $tu = ut=0$ for every $u \in I$ and every strictly upper triangular matrix $t\in A$. In particular, $I^2= 0$.
Set 
$$b = \beta_1 + e_{12} + e_{23}+\dots + e_{k_1,k_1+1}.$$
Then $(b-\beta_1)^{k_1} = e_{1,k_1+1}$ and hence 
$$f(b) = \lambda(\beta_1-\beta_2)^{k_2}\cdots (\beta_1-\beta_r)^{k_r} e_{1,k_1+1}\in I.$$
It is easy to see  that $f(b+y) = f(b)\ne 0$ for every $y\in I$, which shows that neither 
(ii) nor (iii) can hold.
\end{proof}



\begin{corollary}\label{tc2}
If a  nonzero polynomial  $f\in F[X]$ splits in $F$, then $f$ 
 has liftable roots over $F$ in finite dimensions if and only if $f$ is separable.
\end{corollary}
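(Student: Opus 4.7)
The plan is to observe that this corollary is essentially an immediate specialization of Theorem \ref{tc}. The hypothesis that $f$ splits in $F$ means precisely that the splitting field $K$ of $f$ over $F$ coincides with $F$ itself, since $F$ already contains all roots of $f$ and hence is a splitting field. Therefore condition (iii) of Theorem \ref{tc}, namely that $f$ has liftable roots over $K$ in finite dimensions, becomes the statement that $f$ has liftable roots over $F$ in finite dimensions.

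Given this identification, I would simply invoke the equivalence (i)$\iff$(iii) from Theorem \ref{tc}, which then reads: $f$ is separable if and only if $f$ has liftable roots over $F$ in finite dimensions. This is exactly the assertion of the corollary.

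There is no real obstacle here; the work has already been done in the proof of Theorem \ref{tc}. The only (mild) thing worth spelling out is why the assumption "$f$ splits in $F$" forces $K=F$ in the sense of the theorem, and this is immediate from the definition of splitting field together with the convention that $K$ is obtained by adjoining the roots of $f$ to $F$. Accordingly, the proof should consist of essentially one sentence pointing out that applying Theorem \ref{tc} with $K=F$ yields the conclusion.
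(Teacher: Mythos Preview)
Your proposal is correct and matches the paper's approach: the corollary is stated without proof precisely because it follows immediately from Theorem~\ref{tc} by taking $K=F$, exactly as you describe.
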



The proofs of Lemmas  \ref{ls} and \ref{ls2} describe
an algorithm for lifting modulo nil ideals, which, however, 
may be rather complicated when applied to concrete situations.
The following example provides an additional insight.

\begin{example}\label{r1}
First we record an  elementary remark  which is perhaps  of general interest.
Let $A$ be an $F$-algebra  and let $w\in A$ be such that $w^s=0$ for some $s\ge 1$. 
Then $1+w$ is invertible and
 $(1+w)^{-1}=\sum_{n=0}^\infty (-1)^n w^n$ (note that this sum is actually finite). Similarly, having in mind the binomial series for $(1+x)^{\frac{k}{m}}$ where $k,m\in\mathbb Z$, we see that $z=\sum_{n=0}^\infty {\frac{k}{m}\choose n}w^n$ satisfies
$z^m = (1+w)^{k}$; of course, this makes sense only under some restrictions on  the characteristic of $F$.

Now, consider  the polynomial $f(X)=X ^m -\beta$ where $\beta$ is a nonzero element in $F$. Let 
$I$ be a nil ideal of $A$ and let $b\in A$ be such that $f(b)\in I$. Thus,
there is a $w\in I$ such that $b^m = \beta(1+w)$. By the preceding paragraph, 
$A$ contains an element $z\in A$ such that $z-1\in {\rm span}\{w,w^2,\dots\}$ and
$z^m = (1+w)^{-1}$. Since $w$ commutes with $b$, so does $z$. Note that $a = z b$ satisfies
$a-b\in I$ and $a^m  =  \beta$, i.e., $f(a)=0$.
\end{example}

The next example concerns the classical case where $f=X^2 -X$. 

\begin{example}
Take  any  element $b$ from a ring $A$ such that $(b^2 - b)^k=0$ for some $k > 1$.
A direct computation shows that 
$$c= b - (2b-1)(b^2-b)=3b^2 - 2b^3$$
satisfies 
$$c^2-c = (b^2-b)^2 (4b^2-4b-3)$$
and so $(c^2-c)^{k-1}=0$. Hence, by induction on $k$ we see that  there exists an idempotent $e\in A$ such that $e-b \in  (b^2-b)\mathbb Z[b]$. This proves that idempotents can be lifted modulo nil ideals.

The proof just given  is extremely short, but may seem  ad hoc at first glance. However, from the  proof of Lemma \ref{ls}     we see that there is a concept behind it. The definition of $c$ is based on the observation that
$-4f(X) + (2X-1)f'(X)=1$ holds for $f(X)=X^2-X$.
\end{example}

As mentioned in the introduction, our original motivation for studying nil-liftable roots arose from the results of Section \ref{s3}. Let us conclude the paper by some remarks on the  liftable root problem in the context of that section. We first state a simple corollary of Theorem \ref{tc}.

\begin{corollary}\label{tbelow}
The polynomial $X^2+1\in F[X]$ has nil-liftable roots over $F$ if and only if the characteristic of $F$ is not $2$.
\end{corollary}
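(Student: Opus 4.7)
The plan is to deduce this corollary directly from Theorem \ref{tc}, which identifies nil-liftable roots with separability. So the entire task reduces to checking when the polynomial $f(X)=X^2+1$ is separable over $F$, which in turn is a straightforward computation with $f$ and $f'$.

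First, suppose $\operatorname{char}(F)\neq 2$. Then $f'(X)=2X$ is a nonzero polynomial, and $f(0)=1\neq 0$, so $f$ and $f'$ share no roots in any extension of $F$; equivalently, $\gcd(f,f')=1$ in $F[X]$. Hence $f$ is separable, and Theorem \ref{tc} gives that $X^2+1$ has nil-liftable roots over $F$.

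Conversely, if $\operatorname{char}(F)=2$, then $X^2+1=(X+1)^2$, so $f$ has the repeated root $1$ and is not separable. Applying Theorem \ref{tc} again (in the contrapositive direction (ii)$\implies$(i)) shows that $f$ does not have nil-liftable roots over $F$.

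There is no real obstacle here beyond quoting Theorem \ref{tc}; it might be worth noting explicitly, for the reader's benefit, that in characteristic $2$ the failure can be exhibited concretely via the construction in the proof of (ii)$\implies$(i) of Theorem \ref{tc}, taking $A$ to be the $2\times 2$ upper triangular matrices over $F$, $b=1+e_{12}$ (for which $f(b)=e_{12}^2=0$ already), and observing that $f(b+y)=(1+1)^2=0$ persists but no genuine root of $X^2+1$ in $A$ is congruent to $b$ modulo the nil ideal $Fe_{12}$.
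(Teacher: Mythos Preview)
Your core argument is correct and matches the paper's proof: both deduce the corollary from Theorem~\ref{tc} by checking that $X^2+1$ is separable exactly when ${\rm char}\,F\ne 2$. The paper phrases the ${\rm char}\,F\ne 2$ case by observing that $X^2+1$ cannot equal a square $(X+\gamma)^2$, while you use $\gcd(f,f')=1$; these are equivalent.

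However, your final illustrative remark is mistaken. In characteristic $2$, with $A$ the $2\times 2$ upper triangular matrices and $b=1+e_{12}$, one has $f(b)=(b+1)^2=e_{12}^{\,2}=0$, so $b$ itself is already a root of $f$ and trivially lifts itself; this example exhibits no failure of nil-liftability. The construction in the proof of (ii)$\implies$(i) of Theorem~\ref{tc} uses $(k_1+1)\times(k_1+1)$ upper triangular matrices, and here the repeated root has multiplicity $k_1=2$, so one must pass to $3\times 3$ upper triangular matrices with $b=1+e_{12}+e_{23}$ and $I=Fe_{13}$: then $f(b)=(e_{12}+e_{23})^2=e_{13}\ne 0$, and $f(b+y)=e_{13}$ for every $y\in I$, so indeed no root of $f$ is congruent to $b$ modulo $I$. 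Since that paragraph is only an aside, your proof stands, but the remark should be corrected or removed.
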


\begin{proof} If the characteristic of $F$ is $2$, then $f(X)=X ^2+1$ is equal to $(X+1)^2$, so it is not separable. If the characteristic of $F$ is not $2$, then $f$ cannot be equal to $(X+\gamma)^2 = X^2 + 2\gamma X + \gamma^2$, so it is separable.
\end{proof}

Combining this corollary with Theorem \ref{even1}  we obtain the following result.

\begin{corollary}\label{tbelow2}
If   a  (not necessarily finite-dimensional) real algebra $A$  contains a nil ideal $I$ of finite codimension and  an element $a$ such that 
$a-\lambda$ is invertible for every $\lambda\in \mathbb R$, then $\mathbb C$ embeds in $A$.
\end{corollary}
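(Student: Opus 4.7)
The plan is to reduce to the finite-dimensional setting by passing to the quotient $A/I$, apply Theorem \ref{even1} there, and then lift back to $A$ via Corollary \ref{tbelow}.

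First I would observe that $A/I$ is a nonzero finite-dimensional real algebra (nonzero because, if $I=A$, then $a\in I$ would be nilpotent, contradicting the invertibility of $a-0=a$). Let $\bar a = a + I$ denote the image of $a$ in $A/I$. I would next check that $\bar a$ inherits the invertibility property: if $(a-\lambda)c = c(a-\lambda)=1$ in $A$ for some $c\in A$, then $(\bar a - \lambda)\bar c = \bar c(\bar a-\lambda) = 1$ in $A/I$, so $\bar a - \lambda$ is invertible in $A/I$ for every $\lambda \in \mathbb R$. Thus condition (ii) of Theorem \ref{even1} holds for $A/I$, and hence condition (i) holds as well: there exists $b\in A$ such that $\bar b^{\,2} = -1$ in $A/I$, i.e., $b^2 + 1 \in I$.

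The second step is to lift. Since $\mathbb R$ has characteristic different from $2$, Corollary \ref{tbelow} tells us that $X^2+1$ has nil-liftable roots over $\mathbb R$. Applied to the element $b$ and the nil ideal $I$, this yields an $a' \in A$ with $a'-b\in I$ and $(a')^2+1=0$. The element $a'$ generates a subalgebra of $A$ isomorphic to $\mathbb C$, completing the proof.

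There is no serious obstacle here; the only points to be careful about are verifying that $\bar a-\lambda$ is genuinely invertible in $A/I$ (so that Theorem \ref{even1}(ii) applies) and that the nil-liftable roots machinery of Corollary \ref{tbelow} is precisely what converts the existence of a square root of $-1$ modulo $I$ into an honest square root of $-1$ in $A$. Both are immediate from the statements already proved, so the argument is essentially a three-line combination of Theorem \ref{even1} and Corollary \ref{tbelow}.
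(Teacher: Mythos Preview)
Your argument is correct and is precisely the combination of Theorem \ref{even1} and Corollary \ref{tbelow} that the paper has in mind; the paper does not even spell out a separate proof, simply noting that the corollary follows by combining these two results. Your additional verifications (that $A/I$ is nonzero and that $\bar a-\lambda$ remains invertible in the quotient) are exactly the routine checks needed to make the combination rigorous.
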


It should be mentioned that this result does not hold for any real algebra. Indeed, 
 every non-scalar element $a$ in the  algebra of rational functions $\mathbb R(X)$ has the property that $a-\lambda$ is invertible for every $\lambda\in \mathbb R$, but no element in  $\mathbb R(X)$ has square $-1$.

For $F=\mathbb R$, we can state Corollary \ref{tbelow}  as follows: If $I$ is a nil ideal of a real algebra $A$ and
$\varphi$ is a homomorhism from $\mathbb C$ to $A/I$, then there exists a homomorphism  $\psi:\mathbb C\to A$ such that $\pi\psi=\varphi$;
here, $\pi$ is the canonical epimorphism from $A$ to $A/I$. 
Since matrix units can be lifted modulo a nil ideal \cite[Proposition 13.13]{Row}, we can replace $\mathbb C$ with the algebra of $n\times n$ matrices in this statement. Perhaps surprisingly, the next example shows that this does not hold for the quaternion algebra $\mathbb H$.

\begin{example}\label{j}
Let $\mathcal F = \mathbb R\langle X,Y \rangle$ be the free algebra in two indeterminates $X,Y$, and let $\mathcal I$ be the ideal of $\mathcal F$ generated by 
$X^2+1$, $Y^2 +1$, and all elements of the form $(X\circ Y)f (X\circ Y)$ with $f\in \mathcal F$. Set $A = \mathcal F/\mathcal I$. It is an elementary exercise to show that $A$ is  $8$-dimensional.
 Obviously, $x= X + \mathcal I\in A$ and $y=Y+\mathcal I\in A$ satisfy $x^2=y^2=-1$
and the ideal $I$ of $A$ generated by $x\circ y$ has square $0$.

Clearly, $i=x+ I\in A/I$ and $j=y+I\in A/I$ satisfy
$i^2 = j^2=-1$ and $i\circ j=0$.
Since $A$ is generated by $x$ and $y$, $A/I$ is generated by $i$ and $j$. Therefore,
$A/I\cong \mathbb H$. This implies that $I$ is  maximal among nilpotent ideals of $A$, so $I = {\rm rad}(A)$.
Wedderburn's Principal Theorem therefore tells us that there must exist a subalgebra $A'$ of $A$ such that
$A'\cong \mathbb H$; and indeed,
 $x$ and $\frac{1}{2}(yx - xy)$ anticommute and their squares are  $-1$, 
so they generate a subalgebra isomorphic to $\mathbb H$ (we mention that
 $\big(\frac{1}{2}(yx - xy)\big)^2 =-1$ follows from $(x\circ y)^2 =0$).


However, we claim that no elements $a,b\in A$ satisfying  $a^2 = b^2 = -1$ and $a\circ b=0$ have the property that
$a-x, b- y\in I$. Indeed, suppose this were not true. Then $u=a-x\in I$ and $v=b-y\in I$ would satisfy  
 $(x+u)\circ (y+v)=0$. Since $I^2=0$, this gives
$$x\circ y  + u\circ y  + x\circ v =0.$$
As $u,v\in I$, $u=\sum_i p_i(x\circ y)q_i$ and 
 $v=\sum_i r_j(x\circ y)s_j$ 
for some $p_i,q_i,r_j,s_j\in A$. We can further write
$$p_i = f_i + \mathcal I,\,\,\, q_i = g_i + \mathcal I, \,\,\, r_j = h_j + \mathcal I, 
\,\,\, s_j = k_j + \mathcal I, $$
where $f_i,g_i,h_j,k_j\in \mathcal F$. Hence,
\begin{align*} X\circ Y+ \sum_i \big(f_i(X\circ Y)g_i\big)\circ Y + \sum_j X\circ \big(h_j(X\circ Y)k_j\big) 
 \in\mathcal I.\end{align*}
Note that each summand in the two summations consist of   monomials of degree at least $3$. 
Hence,    $X\circ Y$  must belong to  span$\{X ^2+1,Y^2+1\}$, which is a contradiction.
\end{example}

Wedderburn's Principal Theorem holds for all finite-dimensional algebras over perfect fields.
In light of Example \ref{j}, let us point out that
if $I$ is a nilpotent ideal of a finite-dimensional algebra $A$  and $\varphi$ is an embedding of a semisimple algebra $S$ to $A/I$,  then, under the  assumption that $F$ is perfect,  there is an embedding $\psi: S \to A$  such that $\pi(\psi(S)) = \varphi(S)$ (as above, $\pi$ stands for the canonical epimorphism). Indeed,
it is easy to see that $I$ is the radical of the algebra $A_1=\pi^{-1}(\varphi(S))$ and that $A_1/I\cong \varphi(S)$, so by Wedderburn's Principal Theorem
 there is an embedding $\psi$ of $S$ to $A_1\subseteq A$ for which $\pi(\psi(S)) = \varphi(S)$ obviously holds.
Such a weak form of lifting is thus possible, but Example \ref{j}
shows that we cannot always choose $\psi$ so that 
$\pi\psi=\varphi $---not even when $I={\rm rad}(A)$ and $S=A/I$. The following example  shows that if $F$ is not perfect, even such  "weak lifting" does not need to exist.

\begin{example}
If a field $F$ is imperfect, then it has prime characteristic $p$ and  contains  an element $a$ which is not of the form
$x^p$ with $x\in F$. Let $\mathcal I$ be the principal ideal of $F[X]$ generated by $(X ^p-a)^p =X^{p^2}- a^p$, and let $A=F[X]/\mathcal I$. Obviously, $x=X+\mathcal I$ satisfies $x^{p^2} = a ^p$
and $x^p -a\in{\rm rad}(A)$.
Therefore, the element $x+{\rm rad}(A)\in A/{\rm rad}(A)$  satisfies $(x+{\rm rad}(A))^p =a$. It is easy to see that $A$ does not contain an element whose $p$th power is $a$, so $A/{\rm rad}(A)$ does not embed in $A$.
\end{example}

As a final comment we mention that Corollary \ref{tbelow} yields a slightly different proof of Theorem \ref{even1} which avoids using Wedderburn's Principal Theorem. 
 On the other hand, Example \ref{j}  shows that it 
may be difficult to prove Theorem \ref{even3} without applying this old, deep result by Wedderburn.

\end{document}